\documentclass[APA,Times1COL]{WileyNJDv5} 

\articletype{Original Article}%

\received{Date Month Year}
\revised{Date Month Year}
\accepted{Date Month Year}
\journal{Stat}
\volume{00}
\copyyear{2024}
\startpage{1}

\raggedbottom


\usepackage{orcidlink}
\usepackage{natbib}
\newcommand{\etr}{\mathop{\mathrm{etr}}\hskip1pt}
\newcommand{\EE}{\mathsf{E}}
\newtheorem{lemme}{Lemma}


\begin{document}

\title{On Wilks' joint moment formulas for embedded principal minors of Wishart random matrices}

\author[1]{C. Genest}
\author[1]{F. Ouimet}
\author[2]{D. Richards}

\authormark{GENEST \textsc{et al.}}

\titlemark{Wilks' joint moment formulas}

\address[1]{\orgdiv{Department of Mathematics and Statistics}, \orgname{McGill University, Montr\'eal}, \orgaddress{\state{QC}, \country{Canada}}}

\address[2]{\orgdiv{Department of Statistics}, \orgname{Penn State University, University Park}, \orgaddress{\state{PA}, \country{USA}}}

\corres{Corresponding author: Christian Genest\email{christian.genest@mcgill.ca}}

\abstract[Abstract]{In 1934, the American statistician Samuel S.\ Wilks derived remarkable formulas for the joint moments of embedded principal minors of sample covariance matrices in multivariate Gaussian populations, and he used them to compute the moments of sample statistics in various applications related to multivariate linear regression. These important but little-known moment results were extended in 1963 by the Australian statistician A. Graham Constantine using Bartlett's decomposition. In this note, a new proof of Wilks’ results is derived using the concept of iterated Schur complements, thereby bypassing Bartlett’s decomposition. Furthermore, Wilks' open problem of evaluating joint moments of disjoint principal minors of Wishart random matrices is related to the Gaussian product inequality conjecture.}

\keywords{Gaussian product inequality, moments, principal minors, Schur complement, Wishart distribution, zonal polynomials}

\jnlcitation{\cname{%
\author{Genest C},
\author{Ouimet F}, and
\author{Richards D}}.
\ctitle{On Wilks' joint moment formulas for embedded principal minors of Wishart random matrices.} \cjournal{\it Stat} \cvol{2024;00(00):1--6}.}

\maketitle

\section{Introduction}\label{sec:1}

The Wishart distribution, named after the famous Scottish mathematician and agricultural statistician John Wishart (1898--1956), is a matrix-variate extension of the Gamma distribution which is commonly used in multivariate analysis. It is parameterized by a symmetric positive definite scale matrix $\Sigma$ of size $p \times p$ and a real $\alpha$ belonging to the Gindikin set $\{ 0, 1, \ldots , p - 1 \} \cup  (p - 1, \infty)$; see \citet{Gindikin1975}. When the degree-of-freedom parameter $\alpha = N$ is a positive integer, a random matrix $\mathfrak{X}$ with Wishart distribution decomposes as $\mathbf{Z}_1 \mathbf{Z}_1^{\top} + \cdots + \mathbf{Z}_N \mathbf{Z}_N^{\top}$, where $\mathbf{Z}_1,\ldots,\mathbf{Z}_N$ are mutually independent and identically distributed centered Gaussian random vectors of length $p$ with covariance matrix $\Sigma$.

While investigating inferential issues related to multivariate analysis of variance and regression problems, the renowned American statistician Samuel S.\ Wilks (1906--1964) derived a general expression for the joint moments of embedded principal minors of Wishart random matrices in the case in which the degree-of-freedom parameter $\alpha$ is integer-valued. This expression, Eq.~(2.59) in~\cite{Wilks1934} with $t = n$ and all the $\beta$ coefficients therein set equal to $0$, was later extended by \cite{Constantine1963} to the case of any real $\alpha > p-1$.

This remarkable but little-known result, stated in Theorem~\ref{thm:1} below in its general form, was proved by Wilks in the case of integer-valued degrees of freedom using moment-generating operators that commute with the integral sign in conjunction with Cauchy's expansion for the determinant of a bordered matrix and Jacobi's identity for complementary minors. See Section~0.8.5 of the book by \cite{Horn2013} for modern references to these notions.

In the hope of drawing renewed attention to this finding, the present note describes a new approach to the proof of Theorem~\ref{thm:1} based on iterated Schur complements, which might be of independent interest. Definitions and a preliminary lemma are provided in Sections~\ref{sec:2}~and~\ref{sec:3}, respectively. Theorem~\ref{thm:1} is stated and proved in Section~\ref{sec:4}. In Section~\ref{sec:5}, Wilks' open problem of evaluating the joint moments of disjoint principal minors of Wishart random matrices is restated and further motivated by its relationship with the Gaussian product inequality conjecture described, e.g., by \cite{Genest2024}.

\section{Definitions}\label{sec:2}

For any integer $p \in \mathbb{N} = \{1, 2, \ldots \}$, let $\mathcal{S}_{++}^p$ be the cone of real symmetric positive definite matrices of size $p\times p$. For any real number $\beta > (p-1)/2$, let
\[
\Gamma_p (\beta) = \int_{\mathcal{S}_{++}^p} |X|^{\beta - (p + 1)/2} \etr(-X) \mathrm{d} X
\]
denote the multivariate gamma function; see, e.g., Section~35.3 of the handbook by \cite{Olver2010}. Here, the notation $\mathrm{etr}(\cdot)$ stands for the exponential trace and $|\cdot|$ the determinant.

For any degree-of-freedom parameter $\alpha \in (p-1,\infty)$ and any scale matrix $\Sigma \in \mathcal{S}_{++}^p$, the probability density function of the $\mathrm{Wishart}_p (\alpha,\Sigma)$ distribution is given, for all matrices $X \in \mathcal{S}_{++}^p$, by
\[
f_{\alpha,\Sigma}(X) = \frac{|X|^{\alpha/2 - (p + 1)/2} \etr\{-(\Sigma^{-1}/2) X\}}{|2 \Sigma|^{\alpha/2} \Gamma_p(\alpha/2)}.
\]
Whenever a random matrix $\mathfrak{X}$ of size $p\times p$ follows this distribution, one writes $\mathfrak{X}\sim \mathcal{W}_p(\alpha,\Sigma)$ for short. For additional information concerning the Wishart distribution, see, e.g., Section~3.2.1 of the book by~\cite{Muirhead1982}.

\section{Preliminary lemma}\label{sec:3}

This section contains a well-known lemma that will be used in the proof of Theorem~\ref{thm:1}. Note that the proof of this lemma does not use Bartlett's decomposition in any way.

Items~(a)~and~(b) of Lemma~\ref{lem:1} below present the marginal distributions of Wishart random matrices; they can be found for example in Theorem~3.2.10 of the book by~\cite{Muirhead1982}. Item~(c) provides a general expression for the expectation of any principal minor of a Wishart random matrix. It is known in the Gaussian context since the work of~\cite{Wilks1932} on generalizations in the analysis of variance. The proof follows straightforwardly from item~(a) and a renormalization of the Wishart density.

\begin{lemme}
\label{lem:1}
Let $p_1,p_2 \in \mathbb{N}$ and set $p = p_1 + p_2$. Let $\alpha \in (p-1,\infty)$ and $\Sigma \in \mathcal{S}_{++}^p$. Assume that
\[
\mathfrak{X}
= \begin{bmatrix}
\mathfrak{X}_{11} & \mathfrak{X}_{12} \\
\mathfrak{X}_{21} & \mathfrak{X}_{22} \\
\end{bmatrix} \sim \mathcal{W}_p(\alpha,\Sigma), \quad \text{with}~~ \Sigma =
\begin{bmatrix}
\Sigma_{11} & \Sigma_{12} \\
\Sigma_{21} & \Sigma_{22} \\
\end{bmatrix}.
\]
Define $\mathfrak{X}/\mathfrak{X}_{11} = \mathfrak{X}_{22} - \mathfrak{X}_{21} \mathfrak{X}_{11}^{-1} \mathfrak{X}_{12}$ and $\Sigma/\Sigma_{11} = \Sigma_{22} - \Sigma_{21} \Sigma_{11}^{-1} \Sigma_{12}$ to be Schur complements of $\mathfrak{X}_{11}$ in $\mathfrak{X}$ and $\Sigma_{11}$ in $\Sigma$, respectively. Then
\begin{enumerate}
\item[(a)]
$\mathfrak{X}_{11}\sim \mathcal{W}_{p_1}(\alpha,\Sigma_{11})$;
\item[(b)]
$\mathfrak{X}/\mathfrak{X}_{11} \sim \mathcal{W}_{p_2}(\alpha - p_1, \Sigma/\Sigma_{11})$ and is independent of $(\mathfrak{X}_{11},\mathfrak{X}_{12})$;
\item[(c)]
For every real $\nu_1 \in [0,\infty)$, $\EE(|\mathfrak{X}_{11}|^{\nu_1}) = |2 \Sigma_{11}|^{\nu_1} {\Gamma_{p_1}(\alpha/2 + \nu_1)} / {\Gamma_{p_1}(\alpha/2)}$.
\end{enumerate}
\end{lemme}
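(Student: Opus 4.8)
The plan is to establish items (a) and (b) together through a single change of variables on the Wishart density that isolates the Schur complement, and then to obtain item (c) as a one-line moment computation based on item (a). Throughout, write $W = \mathfrak{X}/\mathfrak{X}_{11}$ and $S = \Sigma/\Sigma_{11}$, and recall the two Schur-complement facts used below: the determinant factorization $|X| = |X_{11}|\,|X/X_{11}|$, and the block-inverse formula expressing the four blocks of $\Sigma^{-1}$ in terms of $\Sigma_{11}^{-1}$, $\Sigma_{12}$, and $S^{-1}$. Neither relies on Bartlett's decomposition, which keeps the argument self-contained.

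For items (a) and (b), I would start from the joint density $f_{\alpha,\Sigma}$ and perform the transformation $(X_{11}, X_{12}, X_{22}) \mapsto (X_{11}, X_{12}, W)$ with $W = X_{22} - X_{21}X_{11}^{-1}X_{12}$. Since $W$ is an additive shift of $X_{22}$ by a function of the remaining blocks, this map is triangular and has unit Jacobian. The determinant identity turns $|X|^{\alpha/2 - (p+1)/2}$ into $|X_{11}|^{\alpha/2 - (p+1)/2}\,|W|^{\alpha/2 - (p+1)/2}$, and a direct check shows that the exponent of $|W|$ equals $(\alpha - p_1)/2 - (p_2+1)/2$, exactly the shape parameter of a $\mathcal{W}_{p_2}(\alpha - p_1, S)$ density. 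The crux is then to rewrite $\etr\{-(\Sigma^{-1}/2)X\}$ after the substitution: expanding $\mathrm{tr}(\Sigma^{-1}X)$ blockwise and inserting $X_{22} = W + X_{21}X_{11}^{-1}X_{12}$, the block-inverse formula lets one complete the square in $X_{12}$ so that the exponent splits as a term $\mathrm{tr}(S^{-1}W)$ depending only on $W$ plus a term depending only on $(X_{11}, X_{12})$. The density therefore factorizes, which simultaneously yields the independence of $W$ from $(\mathfrak{X}_{11}, \mathfrak{X}_{12})$ and identifies its law as $\mathcal{W}_{p_2}(\alpha - p_1, S)$, giving item (b). Integrating out $W$ and then the Gaussian block $X_{12}$ leaves the marginal of $X_{11}$; the $X_{12}$-integration contributes a factor $|X_{11}|^{p_2/2}$, which raises the exponent of $|X_{11}|$ to $\alpha/2 - (p_1+1)/2$ and produces exactly the $\mathcal{W}_{p_1}(\alpha, \Sigma_{11})$ density, giving item (a).

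For item (c), I would use item (a) to write $\mathfrak{X}_{11} \sim \mathcal{W}_{p_1}(\alpha, \Sigma_{11})$ and compute $\EE(|\mathfrak{X}_{11}|^{\nu_1})$ directly against its density. Multiplying the integrand by $|Y|^{\nu_1}$ merges with the existing power to give $|Y|^{(\alpha + 2\nu_1)/2 - (p_1+1)/2}$, so the integrand is the unnormalized density of a $\mathcal{W}_{p_1}(\alpha + 2\nu_1, \Sigma_{11})$ law. Recognizing its normalizing constant---valid since $\alpha/2 + \nu_1 > (p_1-1)/2$ because $\alpha > p - 1 \geq p_1 - 1$ and $\nu_1 \geq 0$, so that $\Gamma_{p_1}(\alpha/2 + \nu_1)$ is finite---yields $\EE(|\mathfrak{X}_{11}|^{\nu_1}) = |2\Sigma_{11}|^{\nu_1}\,\Gamma_{p_1}(\alpha/2 + \nu_1)/\Gamma_{p_1}(\alpha/2)$ after cancelling the common $|2\Sigma_{11}|^{\alpha/2}$ and $\Gamma_{p_1}(\alpha/2)$ factors.

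I expect the main obstacle to be the completion-of-the-square step in item (b): one must carry the block-inverse expressions through $\mathrm{tr}(\Sigma^{-1}X)$ and verify that all cross terms in $X_{12}$ and $X_{21}$ recombine cleanly into a single quadratic form that is free of $W$, with the residual quadratic in $X_{11}$ exactly matching the one produced by completing the square. Once this algebraic identity is in hand, the probabilistic conclusions in items (a) and (b) and the moment formula in item (c) follow mechanically.
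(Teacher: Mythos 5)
Your proposal is correct. Note that the paper itself does not reprove items (a) and (b): it cites Theorem 3.2.10 of Muirhead (1982) for those, and for item (c) it uses exactly your renormalization argument, so the genuinely new content you supply is the density-level derivation of (a) and (b). That derivation is sound and honours the paper's stated constraint of avoiding Bartlett's decomposition: the map $(X_{11},X_{12},X_{22})\mapsto(X_{11},X_{12},W)$ has unit Jacobian, the identity $|X|=|X_{11}|\,|W|$ gives the right exponents, and the block-inverse fact $\Sigma^{22}=(\Sigma/\Sigma_{11})^{-1}=S^{-1}$ makes the coefficient of $W$ in $\mathrm{tr}(\Sigma^{-1}X)$ equal to $S^{-1}$, so the $W$-dependence separates immediately upon substituting $X_{22}=W+X_{21}X_{11}^{-1}X_{12}$; the completion of the square is needed only afterwards, to integrate out $X_{12}$ and pick up the factor $|X_{11}|^{p_2/2}$ for item (a), not for the factorization that proves item (b). Two small points are worth making explicit in a final write-up: (i) the factorization of the density yields independence only because the image of $\mathcal{S}_{++}^p$ under your change of variables is the product set $\mathcal{S}_{++}^{p_1}\times\mathbb{R}^{p_1\times p_2}\times\mathcal{S}_{++}^{p_2}$, that is, $X\succ 0$ if and only if $X_{11}\succ 0$ and $W\succ 0$ with $X_{12}$ unrestricted; (ii) the normalizing constants match automatically once the shapes and the product support are identified, via $|2\Sigma|=|2\Sigma_{11}|\,|2S|$ and the fact that the total mass is one, so you need not verify the splitting of $\Gamma_p(\alpha/2)$ by hand.
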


\section{New proof of an old theorem}\label{sec:4}

Theorem~\ref{thm:1} below extends Wilks' original expression for the joint moments of embedded principal minors of Wishart random matrices to the case in which the degree-of-freedom parameter $\alpha$ is allowed to be non-integer. This version of the theorem was proved by~\cite{Constantine1963} through a reproductive property of the so-called zonal polynomials under Wishart expectations; see his Theorem~1. Constantine's proof relied on a technique called Bartlett's decomposition after \cite{Bartlett1934}, where the joint distribution of the entries of the lower triangular matrix is known in the Cholesky decomposition of a Wishart random matrix; see, e.g., Section~3.2.4 of the book by~\cite{Muirhead1982}.

\begin{theorem}
\label{thm:1}
Let $d, p_1, \ldots, p_d \in \mathbb{N}$ and set $p = p_1 + \cdots + p_d$. Let $\alpha \in (p - 1, \infty)$ and $\Sigma \in \mathcal{S}_{++}^p$. Assume that
\[
\mathfrak{X} =
\begin{bmatrix}
\mathfrak{X}_{11} & \cdots & \mathfrak{X}_{1d} \\
\vdots & \ddots & \vdots \\
\mathfrak{X}_{d1} & \cdots & \mathfrak{X}_{dd} \\
\end{bmatrix}
\sim \mathcal{W}_p(\alpha,\Sigma)
\quad \text{with}~~ \Sigma =
\begin{bmatrix}
\Sigma_{11} & \cdots & \Sigma_{1d} \\
\vdots & \ddots & \vdots \\
\Sigma_{d1} & \cdots & \Sigma_{dd} \\
\end{bmatrix}.
\]
For any integer $i \in \{1,\ldots,d\}$, define $\mathfrak{X}_{1:i,1:i}$ to be the principal submatrix of $\mathfrak{X}$ composed of the blocks $(\mathfrak{X}_{k\ell})_{1\leq k,\ell\leq i}$. Then, for all reals $\nu_1, \ldots, \nu_d \in [0,\infty),$
\[
\EE\!\left(\prod_{i=1}^d |\mathfrak{X}_{1:i,1:i}|^{\nu_i}\right)
= \prod_{i=1}^d |2 \Sigma_{1:i,1:i}|^{\nu_i} \frac{\Gamma_{p_i} \big(\alpha/2 - \sum_{k=1}^{i-1} p_k/2 + \sum_{k=i}^d \nu_k \big)}{\Gamma_{p_i} \big( \alpha/2 - \sum_{k=1}^{i-1} p_k/2 \big)},
\]
with the convention that a sum over an empty set equals zero.
\end{theorem}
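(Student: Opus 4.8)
The plan is to argue by induction on the number of blocks $d$, peeling off the leading block at each stage and exploiting the behaviour of embedded principal minors under Schur complementation. The base case $d = 1$ is exactly Lemma~\ref{lem:1}(c) applied with the single exponent $\nu_1$.

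For the inductive step, I would regard $\mathfrak{X}$ as a $2\times 2$ block matrix whose leading block is $\mathfrak{X}_{11}$, of size $p_1 \times p_1$, and whose trailing block collects the remaining rows and columns. Writing $\mathfrak{Z} = \mathfrak{X}/\mathfrak{X}_{11}$ for the Schur complement, Lemma~\ref{lem:1}(b) gives $\mathfrak{Z} \sim \mathcal{W}_{p - p_1}(\alpha - p_1, \Sigma/\Sigma_{11})$ together with the independence of $\mathfrak{Z}$ from $\mathfrak{X}_{11}$. The crucial algebraic observation---the iterated Schur complement identity alluded to in the abstract---is that, for each $i \in \{2, \ldots, d\}$, the leading block of $\mathfrak{Z}$ of dimension $p_2 + \cdots + p_i$ coincides with the Schur complement $\mathfrak{X}_{1:i,1:i}/\mathfrak{X}_{11}$; this holds because the $(k,\ell)$ block of $\mathfrak{Z}$ equals $\mathfrak{X}_{k\ell} - \mathfrak{X}_{k1}\mathfrak{X}_{11}^{-1}\mathfrak{X}_{1\ell}$, so restricting the block indices to $\{2,\ldots,i\}$ reproduces the Schur complement of $\mathfrak{X}_{11}$ inside $\mathfrak{X}_{1:i,1:i}$. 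Combined with the classical determinant factorization $|\mathfrak{X}_{1:i,1:i}| = |\mathfrak{X}_{11}|\,|\mathfrak{X}_{1:i,1:i}/\mathfrak{X}_{11}|$, this lets me rewrite each embedded minor of $\mathfrak{X}$ with $i \geq 2$ as $|\mathfrak{X}_{11}|$ times the corresponding embedded minor $|\mathfrak{Z}_{1:(i-1),1:(i-1)}|$ of $\mathfrak{Z}$.

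Substituting these factorizations into the product and collecting the powers of $|\mathfrak{X}_{11}|$, I would obtain
\[
\prod_{i=1}^d |\mathfrak{X}_{1:i,1:i}|^{\nu_i} = |\mathfrak{X}_{11}|^{\sum_{i=1}^d \nu_i} \prod_{j=1}^{d-1} |\mathfrak{Z}_{1:j,1:j}|^{\nu_{j+1}}.
\]
By the independence asserted in Lemma~\ref{lem:1}(b), the expectation of the right-hand side factors as the product of $\EE(|\mathfrak{X}_{11}|^{\sum_{i=1}^d \nu_i})$ and $\EE(\prod_{j=1}^{d-1} |\mathfrak{Z}_{1:j,1:j}|^{\nu_{j+1}})$. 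The first factor is evaluated by Lemma~\ref{lem:1}(c) with aggregated exponent $\sum_{i=1}^d \nu_i$, while the second factor is precisely the joint moment treated by the theorem for the $(d-1)$-block Wishart matrix $\mathfrak{Z} \sim \mathcal{W}_{p-p_1}(\alpha - p_1, \Sigma/\Sigma_{11})$, so the induction hypothesis applies with the shifted degree-of-freedom and scale parameters.

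The remaining work is bookkeeping. I would verify that the gamma-function arguments match: the denominator degree $(\alpha - p_1)/2 - \sum_{k=1}^{i-2} p_{k+1}/2$ produced by the induction hypothesis simplifies to $\alpha/2 - \sum_{k=1}^{i-1} p_k/2$, and the shift $\sum_{k=i-1}^{d-1}\nu_{k+1}$ becomes $\sum_{k=i}^d \nu_k$, exactly as in the theorem's statement. For the scale factors, I would use the determinant identity $|\Sigma_{1:i,1:i}| = |\Sigma_{11}|\,|\Sigma_{1:i,1:i}/\Sigma_{11}|$, together with the nested Schur complement identity applied to $\Sigma$, to show that $|2(\Sigma_{1:i,1:i}/\Sigma_{11})| = |2\Sigma_{1:i,1:i}|/|2\Sigma_{11}|$; the accumulated factor $|2\Sigma_{11}|^{\sum_{i} \nu_i}$ then recombines with the inductive scale factors to yield precisely $\prod_{i=1}^d |2\Sigma_{1:i,1:i}|^{\nu_i}$. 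I expect the main obstacle to be neither the independence nor the determinant algebra---both are routine once the set-up is fixed---but rather the clean identification of the embedded minors of the Schur complement with the Schur complements of the embedded minors, since it is this nested structure that makes the product telescope and drives the entire induction.
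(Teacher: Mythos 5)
Your proposal is correct and is essentially the paper's own argument: both rest on peeling off the leading block via its Schur complement, the observation that embedded minors of the Schur complement coincide with Schur complements of the embedded minors (the paper's identity \eqref{eq:1}, which you justify directly by block restriction rather than by citing the quotient property), the determinant factorization, and the distributional and independence statements of Lemma~\ref{lem:1}. The only difference is organizational---you phrase the repeated application of Lemma~\ref{lem:1} as an induction on $d$, whereas the paper unrolls all $d$ iterated Schur complements at once and then applies the lemma step by step---and your bookkeeping of the gamma arguments and of the scale factors $|2\Sigma_{1:i,1:i}|^{\nu_i}$ checks out.
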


\begin{proof}
Let $\mathfrak{X}_{2:d,2:d}$, $\mathfrak{X}_{2:d,1}$ and $\mathfrak{X}_{1,2:d}$ be the submatrices of $\mathfrak{X}$ formed by the blocks $\smash{(\mathfrak{X}_{k\ell})_{2\leq k,\ell\leq d}}$, $\smash{(\mathfrak{X}_{k1})_{2\leq k\leq d}}$ and $\smash{(\mathfrak{X}_{1\ell})_{2\leq \ell\leq d}}$, respectively. Define
\[
\mathfrak{X}^{[1]} = \mathfrak{X} / \mathfrak{X}_{11} = \mathfrak{X}_{2:d,2:d} - \mathfrak{X}_{2:d,1} \mathfrak{X}_{11}^{-1} \mathfrak{X}_{1,2:d}
\]
to be the Schur complement of the block $\mathfrak{X}_{11}$ in $\mathfrak{X}$. Similarly, the matrix $\mathfrak{X}^{[1]}$ is composed of $(d-1)\times (d-1)$ blocks, so the Schur complement of the top-left block in $\mathfrak{X}^{[1]}$ is defined by
\[
\mathfrak{X}^{[2]} = \mathfrak{X}^{[1]} / \mathfrak{X}^{[1]}_{11} = \mathfrak{X}^{[1]}_{2:(d - 1), 2:(d - 1)} - \mathfrak{X}^{[1]}_{2:(d-1),1} (\mathfrak{X}^{[1]}_{11})^{-1} \mathfrak{X}^{[1]}_{1,2:(d-1)}.
\]

This process of taking iterative Schur complements of the top-left block at each step goes on similarly. After $k\in \{1,\ldots,d-1\}$ steps, one obtains the block matrix
\[
\mathfrak{X}^{[k]} = \mathfrak{X}^{[k-1]} / \mathfrak{X}^{[k-1]}_{11} = \mathfrak{X}^{[k-1]}_{2:(d - k + 1), 2:(d - k + 1)} - \mathfrak{X}^{[k-1]}_{2:(d-k+1),1} (\mathfrak{X}^{[k-1]}_{11})^{-1} \mathfrak{X}^{[k-1]}_{1,2:(d-k+1)},
\]
and one writes $\smash{\mathfrak{X}^{[k]}_{11}}$ for its top-left block.

The same process can also be applied, beginning with a principal submatrix of $\mathfrak{X}$ instead of the full matrix. For example, for any given integer $i \in \{1,\ldots,d\}$, take $\mathfrak{X}_{1:i,1:i}$ to be the principal submatrix of $\mathfrak{X}$ composed of the blocks $(\mathfrak{X}_{k\ell})_{1 \leq k, \ell \leq i}$. By applying exactly the same $k \in \{1,\ldots,i-1\}$ steps of taking iterative Schur complements of the top-left block at each step, one can naturally express the result as $(\mathfrak{X}_{1:i,1:i})^{[k]}$, and its top-left block as $\smash{(\mathfrak{X}_{1:i,1:i})^{[k]}_{11}}$.

Fortunately, a simplification occurs due to the quotient property of Schur complements; see, e.g., Eq.~(0.8.5.12) of the book by~\cite{Horn2013}. Indeed, one has the convenient result that the top-left block of $(\mathfrak{X}_{1:i,1:i})^{[k]}$ corresponds with the top-left block of $\mathfrak{X}^{[k]}$, viz.,
\begin{equation}
\label{eq:1}
(\mathfrak{X}_{1:i,1:i})^{[k]}_{11} = \mathfrak{X}^{[k]}_{11}.
\end{equation}
Everything mentioned above also applies verbatim to $\Sigma$.

It is well-known from linear algebra that the determinant of a block matrix can be written as the product of the determinant of the top-left block and the determinant of its Schur complement; see, e.g., Eq.~(0.8.5.1) of the book by~\cite{Horn2013}. Specifically, if
\[
M =
\begin{bmatrix}
M_{11} & M_{12} \\
M_{21} & M_{22}
\end{bmatrix},\vspace{-2mm}
\]
then
\[
|M| = |M_{11}| \times |M^{[1]}_{11}| =  |M^{[0]}_{11}| \times |M^{[1]}_{11}|,
\]
with the convention that $M^{[0]} = M$. By iterating this last formula and applying the identity \eqref{eq:1}, one finds that, for every integer $i \in \{1,\ldots,d\}$,
\[
|\mathfrak{X}_{1:i,1:i}| = \prod_{k=1}^i |(\mathfrak{X}_{1:i,1:i})^{[k-1]}_{11}| = \prod_{k=1}^i |\mathfrak{X}^{[k-1]}_{11}|.
\]
Hence, for all reals $\nu_1, \ldots, \nu_d \in [0,\infty),$
\begin{equation}
\label{eq:2}
\prod_{i=1}^d |\mathfrak{X}_{1:i,1:i}|^{\nu_i}
= \prod_{i=1}^d \prod_{k=1}^i |\mathfrak{X}^{[k-1]}_{11}|^{\nu_i}
= \prod_{k=1}^d \prod_{i=k}^d |\mathfrak{X}^{[k-1]}_{11}|^{\nu_i}
= \prod_{k=1}^d |\mathfrak{X}^{[k-1]}_{11}|^{V_k},
\end{equation}
where, for each integer $k \in \{ 1, \ldots, d\}$, $V_k = \nu_k + \dots + \nu_d$ and the notation $P_k = p_1 + \dots + p_k$ is used below with the convention that $P_0 = 0$.

Taking the expectation in the last equation and applying Lemma~\ref{lem:1} iteratively, one finds successively
\[
\begin{aligned}
\EE\!\left(\prod_{i=1}^d |\mathfrak{X}_{1:i,1:i}|^{\nu_i}\right)
&= \EE\!\left(\prod_{k=1}^d |\mathfrak{X}^{[k-1]}_{11}|^{V_k}\right) \\
&= 2^{p_1 V_1} |\Sigma^{[0]}_{11}|^{V_1} \frac{\Gamma_{p_1}(\alpha/2 - P_0/2 + V_1)}{\Gamma_{p_1}(\alpha/2 - P_0/2)} \times \EE\!\left(\prod_{k=2}^d |\mathfrak{X}^{[k-1]}_{11}|^{V_k}\right) \\
&= 2^{p_1 V_1} |\Sigma^{[0]}_{11}|^{V_1} \frac{\Gamma_{p_1}(\alpha/2 - P_0/2 + V_1)}{\Gamma_{p_1}(\alpha/2 - P_0/2)} \\
&\qquad\times 2^{p_2 V_2} |\Sigma^{[1]}_{11}|^{V_2} \frac{\Gamma_{p_2}(\alpha/2 - P_1/2 + V_2)}{\Gamma_{p_2}(\alpha/2 - P_1/2)} \\
&\qquad\times \EE\!\left(\prod_{k=3}^d |\mathfrak{X}^{[k-1]}_{11}|^{V_k}\right) \\
&= \cdots \\
&= \prod_{k=1}^d 2^{p_k V_k} |\Sigma^{[k-1]}_{11}|^{V_k} \frac{\Gamma_{p_k}(\alpha/2 - P_{k-1}/2 + V_k)}{\Gamma_{p_k}(\alpha/2 - P_{k-1}/2)}.
\end{aligned}
\]
Using \eqref{eq:2} this time for the matrix $\Sigma$, and reorganizing some factors using the summation by parts relationship
\[
\smash{\sum_{k=1}^d p_k V_k = \sum_{i=1}^d \nu_i P_i},
\]
one obtains
\[
\prod_{i=1}^d 2^{\nu_i P_i} |\Sigma_{1:i,1:i}|^{\nu_i} \frac{\Gamma_{p_i}(\alpha/2 - P_{i-1}/2 + V_i)}{\Gamma_{p_i}(\alpha/2 - P_{i-1}/2)}.
\]
This concludes the proof.
\end{proof}

\section{An open problem}\label{sec:5}

Following Theorem~\ref{thm:1}, a natural question, which Wilks also had the foresight to ask in 1934, is whether one can derive a general formula for the joint moments of disjoint principal minors of Wishart random matrices, viz.,
\begin{equation}
\label{eq:3}
\EE\!\left(\prod_{i=1}^d |\mathfrak{X}_{ii}|^{\nu_i}\right).
\end{equation}
When $\alpha\in (p-1,\infty)$ and the scale matrix $\Sigma$ is block-diagonal, this problem is simple to solve using the independence of the diagonal blocks in conjunction with Lemma~\ref{lem:1}~(c). However, as~\citet[p.~326]{Wilks1934} pointed out, the general case is ``extremely complicated.'' The problem is still open to the present day.

One motivation for obtaining a formula for \eqref{eq:3} is its potential utility in tackling the Wishart analog of the Gaussian product inequality conjecture, proposed by~\cite{Genest2024}. The conjecture is stated below for convenience.

\begin{conjecture}
\label{conj:1}
Suppose that the assumptions of Theorem~\ref{thm:1} hold. Then, for all reals $\nu_1, \ldots, \nu_d \in [0,\infty),$
\[
\EE\!\left(\prod_{i=1}^d |\mathfrak{X}_{ii}|^{\nu_i}\right) \geq \prod_{i=1}^d \EE\!\left(|\mathfrak{X}_{ii}|^{\nu_i}\right).
\]
\end{conjecture}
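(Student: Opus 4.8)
The plan is to prove Conjecture~\ref{conj:1} by induction on $d$, using the marginal and conditional structure of Lemma~\ref{lem:1} to collapse the $d$-block inequality into a single positive-correlation statement, and then to attack that statement through the Gaussian representation in the integer case together with an analytic continuation in $\alpha$. Grouping the blocks $2,\ldots,d$ together, Lemma~\ref{lem:1}(a) shows that $\mathfrak{X}_{2:d,2:d}\sim\mathcal{W}_{p-p_1}(\alpha,\Sigma_{2:d,2:d})$, so the induction hypothesis applies to $\prod_{i=2}^d|\mathfrak{X}_{ii}|^{\nu_i}$. It therefore suffices to prove the two-group inequality
\[
\EE\Big(|\mathfrak{X}_{11}|^{\nu_1}\prod_{i=2}^d|\mathfrak{X}_{ii}|^{\nu_i}\Big)\geq \EE\big(|\mathfrak{X}_{11}|^{\nu_1}\big)\,\EE\Big(\prod_{i=2}^d|\mathfrak{X}_{ii}|^{\nu_i}\Big),
\]
that is, a nonnegative covariance between $|\mathfrak{X}_{11}|^{\nu_1}$ and the increasing functional $\prod_{i=2}^d|\mathfrak{X}_{ii}|^{\nu_i}$.

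For this core covariance inequality I would condition on $(\mathfrak{X}_{11},\mathfrak{X}_{1,2:d})$. By Lemma~\ref{lem:1}(b), the Schur complement $C=\mathfrak{X}/\mathfrak{X}_{11}\sim\mathcal{W}_{p-p_1}(\alpha-p_1,\Sigma/\Sigma_{11})$ is independent of the conditioning pair, while $\mathfrak{X}_{2:d,2:d}=C+B$ with $B=\mathfrak{X}_{2:d,1}\mathfrak{X}_{11}^{-1}\mathfrak{X}_{1,2:d}$ positive semidefinite and measurable with respect to the conditioning. Integrating out $C$ turns $\EE(\prod_{i=2}^d|\mathfrak{X}_{ii}|^{\nu_i}\mid \mathfrak{X}_{11},\mathfrak{X}_{1,2:d})$ into a functional $h(B)=\EE_C\prod_{i=2}^d|(C+B)_{ii}|^{\nu_i}$ that is nondecreasing in the Loewner order, so the target reduces to showing $\mathrm{Cov}(|\mathfrak{X}_{11}|^{\nu_1},h(B))\geq 0$, where both arguments are functions of the matrix-normal pair $(\mathfrak{X}_{11},\mathfrak{X}_{1,2:d})$. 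When $\Sigma_{1,2:d}=0$ the two factors are independent and equality holds; the idea is to treat the general case as a perturbation in the cross-covariance $\Sigma_{1,2:d}$, expanding both the Wishart density and $h$ in zonal polynomials and verifying that the resulting coefficients are nonnegative. In the integer case $\alpha=N$ I would instead use $\mathfrak{X}=\sum_{n=1}^N\mathbf{Z}_n\mathbf{Z}_n^\top$ and the Cauchy--Binet formula to write each $|\mathfrak{X}_{ii}|$ as a sum of squared determinants of Gaussian submatrices, so that the covariance inequality becomes a determinantal instance of the classical Gaussian product inequality. The passage from integer to real $\alpha$ would then follow by analytic continuation, using that both sides are analytic on the Gindikin range and admit the closed forms supplied by Theorem~\ref{thm:1} and Lemma~\ref{lem:1}(c).

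The hard part will be controlling the sign of $\mathrm{Cov}(|\mathfrak{X}_{11}|^{\nu_1},h(B))$ for an arbitrary scale matrix $\Sigma\in\mathcal{S}_{++}^p$. A pure association or FKG argument closes the proof only when the Wishart law is multivariate totally positive of order two, which holds for special covariance structures but not for all $\Sigma$, whereas the conjecture asserts the inequality throughout $\mathcal{S}_{++}^p$. Moreover, the tempting Schur-complement bound $|\mathfrak{X}_{ii}|\geq|\mathfrak{X}^{[i-1]}_{11}|$, which decouples the problem into the mutually independent conditional blocks of Theorem~\ref{thm:1}, is too lossy: it replaces each marginal minor by a stochastically smaller one and hence pushes the comparison in the wrong direction. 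The obstruction is thus structurally the same as in the scalar Gaussian product inequality, of which Conjecture~\ref{conj:1} is a genuine matrix analogue, and I expect that any complete proof must exploit the sign pattern of the zonal-polynomial expansion rather than monotonicity alone.
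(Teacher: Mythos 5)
The statement you are trying to prove is not a theorem in the paper: it is Conjecture~\ref{conj:1}, which the authors explicitly present as open (it is the Wishart analogue of the Gaussian product inequality conjecture, and even the closed form for $\EE(\prod_{i=1}^d|\mathfrak{X}_{ii}|^{\nu_i})$ that would feed into it is Wilks' unsolved problem from 1934). The paper offers no proof, so there is nothing to compare your argument against except its own internal completeness --- and as written it is a research programme, not a proof. The induction reduces the problem to the two-group covariance inequality $\mathrm{Cov}(|\mathfrak{X}_{11}|^{\nu_1},h(B))\geq 0$, and at that point every route you propose is left open: the zonal-polynomial expansion in the cross-covariance $\Sigma_{1,2:d}$ is accompanied only by the hope of ``verifying that the resulting coefficients are nonnegative,'' which is precisely the hard content of the conjecture and is not carried out; and the integer-$\alpha$ reduction via $\mathfrak{X}=\sum_n\mathbf{Z}_n\mathbf{Z}_n^\top$ and Cauchy--Binet lands you on (a determinantal generalization of) the classical Gaussian product inequality, which the paper itself notes is unresolved in general. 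Reducing an open conjecture to another open conjecture does not close the argument, and the analytic continuation in $\alpha$ cannot transfer an inequality you have not established for any integer case. Your own final paragraph concedes this: the sign of the covariance for arbitrary $\Sigma\in\mathcal{S}_{++}^p$ is exactly the missing step.

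Several of the structural observations are sound and worth keeping --- the reduction to a two-group inequality via Lemma~\ref{lem:1}(a), the decomposition $\mathfrak{X}_{2:d,2:d}=C+B$ with $C$ independent of $(\mathfrak{X}_{11},\mathfrak{X}_{1,2:d})$ from Lemma~\ref{lem:1}(b), the Loewner monotonicity of $h$, and the correct diagnosis that the bound $|\mathfrak{X}_{ii}|\geq|\mathfrak{X}^{[i-1]}_{11}|$ goes the wrong way. But be aware of one additional soft spot: even granting the two-group inequality, your induction needs the base case and the grouped inequality to compose, i.e., you implicitly use that $\EE(|\mathfrak{X}_{11}|^{\nu_1})$ factors out cleanly, which is fine, but the two-group statement itself is a special case of the conjecture ($d=2$ with blocks of sizes $p_1$ and $p-p_1$) and so the induction does not genuinely lower the difficulty --- it only reorganizes it. A complete proof would have to supply the positivity input that both the paper and your sketch identify as missing.
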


The Gaussian product inequality conjecture itself, which recently gained a lot of traction, states that if $\mathbf{Z} = (Z_1,\ldots,Z_d)$ is a centered Gaussian random vector with covariance matrix $\Sigma$, then for all reals $\nu_1, \ldots, \nu_d\in [0,\infty)$,
\[
\EE \left( \prod_{i=1}^d |Z_i|^{2 \nu_i} \right) \geq \prod_{i=1}^d \EE \big (|Z_i|^{2 \nu_i} \big).
\]
The special case $\nu_1 = \dots = \nu_d\in \mathbb{N}_0 = \{0, 1, \ldots \}$ corresponds to the real polarization problem in functional analysis, originally formulated by \cite{Benitez1998}. It was reframed in the above Gaussian setting by~\cite{Frenkel2008}. Readers may refer to \cite{Genest2024} for a brief survey of currently known results and to \cite{Herry2024} for the latest advancement.


\bmsection*{AUTHOR CONTRIBUTIONS}

The manuscript conceptualization, writing, reviewing, and editing were shared equally by Christian Genest, Fr\'ed\'eric Ouimet, and Donald Richards. The mathematical arguments were mostly developed by Fr\'ed\'eric Ouimet.

\bmsection*{CONFLICT OF INTEREST STATEMENT}

The authors have no conflicts of interest to report.

\bmsection*{DATA AVAILABILITY STATEMENT}

Not applicable.

\bmsection*{Financial disclosure}

None reported.

\bmsection*{ORCID}

Christian Genest: \orcidlink{https://orcid.org/0000-0002-1764-0202} \url{https://orcid.org/0000-0002-1764-0202} \\
Fr\'ed\'eric Ouimet: \orcidlink{https://orcid.org/0000-0001-7933-5265} \url{https://orcid.org/0000-0001-7933-5265} \\
Donald Richards: \orcidlink{https://orcid.org/0000-0002-2254-4514} \url{https://orcid.org/0000-0002-2254-4514}

\bibliography{GeOuRiStat}

\begin{thebibliography}{}

\bibitem [\protect \citeauthoryear {%
Bartlett%
}{%
Bartlett%
}{%
{\protect \APACyear {1934}}%
}]{%
Bartlett1934}
\APACinsertmetastar {%
Bartlett1934}%
\begin{APACrefauthors}%
Bartlett, M\BPBI S.%
\end{APACrefauthors}%
\unskip\
\newblock
\APACrefYearMonthDay{1934}{}{}.
\newblock
{\BBOQ}\APACrefatitle {On the theory of statistical regression} {On the theory
  of statistical regression}.{\BBCQ}
\newblock
\APACjournalVolNumPages{Proc. R. Soc. Edinb.}{53}{}{260--283}.
\newblock
\APACrefnote{\href{https://doi.org/10.1017/S0370164600015637}{DOI:10.1017/S0370164600015637}}
\PrintBackRefs{\CurrentBib}

\bibitem [\protect \citeauthoryear {%
Ben\'{\i}tez%
, Sarantopoulos%
\BCBL {}\ \BBA {} Tonge%
}{%
Ben\'{\i}tez%
\ \protect \BOthers {.}}{%
{\protect \APACyear {1998}}%
}]{%
Benitez1998}
\APACinsertmetastar {%
Benitez1998}%
\begin{APACrefauthors}%
Ben\'{\i}tez, C.%
, Sarantopoulos, Y.%
\BCBL {}\ \BBA {} Tonge, A.%
\end{APACrefauthors}%
\unskip\
\newblock
\APACrefYearMonthDay{1998}{}{}.
\newblock
{\BBOQ}\APACrefatitle {Lower bounds for norms of products of polynomials}
  {Lower bounds for norms of products of polynomials}.{\BBCQ}
\newblock
\APACjournalVolNumPages{Math. Proc. Cambridge Philos. Soc.}{124}{3}{395--408}.
\newblock
\APACrefnote{\href{http://www.ams.org/mathscinet-getitem?mr=MR1636556}{MR1636556}}
\PrintBackRefs{\CurrentBib}

\bibitem [\protect \citeauthoryear {%
Constantine%
}{%
Constantine%
}{%
{\protect \APACyear {1963}}%
}]{%
Constantine1963}
\APACinsertmetastar {%
Constantine1963}%
\begin{APACrefauthors}%
Constantine, A\BPBI G.%
\end{APACrefauthors}%
\unskip\
\newblock
\APACrefYearMonthDay{1963}{}{}.
\newblock
{\BBOQ}\APACrefatitle {Some non-central distribution problems in multivariate
  analysis} {Some non-central distribution problems in multivariate
  analysis}.{\BBCQ}
\newblock
\APACjournalVolNumPages{Ann. Math. Statist.}{34}{4}{1270--1285}.
\newblock
\APACrefnote{\href{http://www.ams.org/mathscinet-getitem?mr=MR0181056}{MR0181056}}
\PrintBackRefs{\CurrentBib}

\bibitem [\protect \citeauthoryear {%
Frenkel%
}{%
Frenkel%
}{%
{\protect \APACyear {2008}}%
}]{%
Frenkel2008}
\APACinsertmetastar {%
Frenkel2008}%
\begin{APACrefauthors}%
Frenkel, P\BPBI E.%
\end{APACrefauthors}%
\unskip\
\newblock
\APACrefYearMonthDay{2008}{}{}.
\newblock
{\BBOQ}\APACrefatitle {Pfaffians, {H}afnians and products of real linear
  functionals} {Pfaffians, {H}afnians and products of real linear
  functionals}.{\BBCQ}
\newblock
\APACjournalVolNumPages{Math. Res. Lett.}{15}{2}{351--358}.
\newblock
\APACrefnote{\href{http://www.ams.org/mathscinet-getitem?mr=MR2385646}{MR2385646}}
\PrintBackRefs{\CurrentBib}

\bibitem [\protect \citeauthoryear {%
Genest%
, Ouimet%
\BCBL {}\ \BBA {} Richards%
}{%
Genest%
\ \protect \BOthers {.}}{%
{\protect \APACyear {2024}}%
}]{%
Genest2024}
\APACinsertmetastar {%
Genest2024}%
\begin{APACrefauthors}%
Genest, C.%
, Ouimet, F.%
\BCBL {}\ \BBA {} Richards, D.%
\end{APACrefauthors}%
\unskip\
\newblock
\APACrefYearMonthDay{2024}{}{}.
\newblock
{\BBOQ}\APACrefatitle {On the {G}aussian product inequality conjecture for
  disjoint principal minors of {W}ishart random matrices} {On the {G}aussian
  product inequality conjecture for disjoint principal minors of {W}ishart
  random matrices}.{\BBCQ}
\newblock
\APACjournalVolNumPages{Preprint}{}{}{27 pp.}
\newblock
\APACrefnote{\href{https://arxiv.org/abs/2311.00202v2}{arXiv:2311.00202v2}}
\PrintBackRefs{\CurrentBib}

\bibitem [\protect \citeauthoryear {%
Gindikin%
}{%
Gindikin%
}{%
{\protect \APACyear {1975}}%
}]{%
Gindikin1975}
\APACinsertmetastar {%
Gindikin1975}%
\begin{APACrefauthors}%
Gindikin, S\BPBI G.%
\end{APACrefauthors}%
\unskip\
\newblock
\APACrefYearMonthDay{1975}{}{}.
\newblock
{\BBOQ}\APACrefatitle {Invariant generalized functions in homogeneous domains}
  {Invariant generalized functions in homogeneous domains}.{\BBCQ}
\newblock
\APACjournalVolNumPages{Funkcional. Anal. i Prilo\v{z}en}{9}{1}{56--58}.
\newblock
\APACrefnote{\href{https://doi.org/10.1007/BF01078179}{MR0377423}}
\PrintBackRefs{\CurrentBib}

\bibitem [\protect \citeauthoryear {%
Herry%
, Malicet%
\BCBL {}\ \BBA {} Poly%
}{%
Herry%
\ \protect \BOthers {.}}{%
{\protect \APACyear {2024}}%
}]{%
Herry2024}
\APACinsertmetastar {%
Herry2024}%
\begin{APACrefauthors}%
Herry, R.%
, Malicet, D.%
\BCBL {}\ \BBA {} Poly, G.%
\end{APACrefauthors}%
\unskip\
\newblock
\APACrefYearMonthDay{2024}{}{}.
\newblock
{\BBOQ}\APACrefatitle {A short proof of a strong form of the three dimensional
  {G}aussian product inequality} {A short proof of a strong form of the three
  dimensional {G}aussian product inequality}.{\BBCQ}
\newblock
\APACjournalVolNumPages{Proc. Amer. Math. Soc.}{152}{1}{403--409}.
\newblock
\APACrefnote{\href{http://www.ams.org/mathscinet-getitem?mr=MR4661091}{MR4661091}}
\PrintBackRefs{\CurrentBib}

\bibitem [\protect \citeauthoryear {%
Horn%
\ \BBA {} Johnson%
}{%
Horn%
\ \BBA {} Johnson%
}{%
{\protect \APACyear {2013}}%
}]{%
Horn2013}
\APACinsertmetastar {%
Horn2013}%
\begin{APACrefauthors}%
Horn, R\BPBI A.%
\BCBT {}\ \BBA {} Johnson, C\BPBI R.%
\end{APACrefauthors}%
\unskip\
\newblock
\APACrefYear{2013}.
\newblock
\APACrefbtitle {Matrix {A}nalysis} {Matrix {A}nalysis}\ (\PrintOrdinal{Second}\
  \BEd).
\newblock
\APACaddressPublisher{}{Cambridge University Press, Cambridge}.
\newblock
\APACrefnote{\href{http://www.ams.org/mathscinet-getitem?mr=MR2978290}{MR2978290}}
\PrintBackRefs{\CurrentBib}

\bibitem [\protect \citeauthoryear {%
Muirhead%
}{%
Muirhead%
}{%
{\protect \APACyear {1982}}%
}]{%
Muirhead1982}
\APACinsertmetastar {%
Muirhead1982}%
\begin{APACrefauthors}%
Muirhead, R\BPBI J.%
\end{APACrefauthors}%
\unskip\
\newblock
\APACrefYear{1982}.
\newblock
\APACrefbtitle {Aspects of {M}ultivariate {S}tatistical {T}heory} {Aspects of
  {M}ultivariate {S}tatistical {T}heory}.
\newblock
\APACaddressPublisher{}{John Wiley \& Sons, Inc., New York}.
\newblock
\APACrefnote{\href{http://www.ams.org/mathscinet-getitem?mr=MR652932}{MR652932}}
\PrintBackRefs{\CurrentBib}

\bibitem [\protect \citeauthoryear {%
Olver%
, Lozier%
, Boisvert%
\BCBL {}\ \BBA {} Clark%
}{%
Olver%
\ \protect \BOthers {.}}{%
{\protect \APACyear {2010}}%
}]{%
Olver2010}
\APACinsertmetastar {%
Olver2010}%
\begin{APACrefauthors}%
Olver, F\BPBI W\BPBI J.%
, Lozier, D\BPBI W.%
, Boisvert, R\BPBI F.%
\BCBL {}\ \BBA {} Clark, C\BPBI W.%
\end{APACrefauthors}%
\ (\BEDS).
\unskip\
\newblock
\APACrefYear{2010}.
\newblock
\APACrefbtitle {N{IST} {H}andbook of {M}athematical {F}unctions} {N{IST}
  {H}andbook of {M}athematical {F}unctions}.
\newblock
\APACaddressPublisher{}{U.S. Department of Commerce, National Institute of
  Standards and Technology, Washington, DC; Cambridge University Press,
  Cambridge}.
\newblock
\APACrefnote{\href{http://www.ams.org/mathscinet-getitem?mr=MR2723248}{MR2723248}}
\PrintBackRefs{\CurrentBib}

\bibitem [\protect \citeauthoryear {%
Wilks%
}{%
Wilks%
}{%
{\protect \APACyear {1932}}%
}]{%
Wilks1932}
\APACinsertmetastar {%
Wilks1932}%
\begin{APACrefauthors}%
Wilks, S\BPBI S.%
\end{APACrefauthors}%
\unskip\
\newblock
\APACrefYearMonthDay{1932}{}{}.
\newblock
{\BBOQ}\APACrefatitle {Certain generalizations in the analysis of variance}
  {Certain generalizations in the analysis of variance}.{\BBCQ}
\newblock
\APACjournalVolNumPages{Biometrika}{24}{3/4}{471--494}.
\newblock
\APACrefnote{\href{https://www.doi.org/10.2307/2331979}{DOI:10.2307/2331979}}
\PrintBackRefs{\CurrentBib}

\bibitem [\protect \citeauthoryear {%
Wilks%
}{%
Wilks%
}{%
{\protect \APACyear {1934}}%
}]{%
Wilks1934}
\APACinsertmetastar {%
Wilks1934}%
\begin{APACrefauthors}%
Wilks, S\BPBI S.%
\end{APACrefauthors}%
\unskip\
\newblock
\APACrefYearMonthDay{1934}{}{}.
\newblock
{\BBOQ}\APACrefatitle {Moment-generating operators for determinants of product
  moments in samples from a normal system} {Moment-generating operators for
  determinants of product moments in samples from a normal system}.{\BBCQ}
\newblock
\APACjournalVolNumPages{Ann. of Math. (2)}{35}{2}{312--340}.
\newblock
\APACrefnote{\href{http://www.ams.org/mathscinet-getitem?mr=MR3278931}{MR3278931}}
\PrintBackRefs{\CurrentBib}

\end{thebibliography}

\bmsection*{Author Biography}

\begin{biography}{}{{\textbf{Christian Genest} is Professor and Canada Research Chair in Stochastic Dependence Modeling at McGill University. His research is at the crossroad between extreme-value analysis, multivariate analysis, and nonparametric statistics.

\noindent
\textbf{Fr\'ed\'eric Ouimet} is a Postdoctoral Fellow at McGill University. His research centers on asymptotic theory, multivariate analysis, nonparametric density estimation, Gaussian fields, and branching processes.

\noindent
\textbf{Donald Richards} is a Distinguished Professor Emeritus of Statistics at Penn State University. His research interests include multivariate statistical analysis, probability inequalities, and special functions of matrix argument.}}
\end{biography}

\end{document}